\newtheorem{theorem}{Theorem}[section]
\newtheorem{proposition}[theorem]{Proposition}
\newtheorem{lemma}[theorem]{Lemma}
\newtheorem{conjecture}[theorem]{Conjecture}
\theoremstyle{definition}
\newtheorem{remark}[theorem]{Remark}
\newtheorem{question}[theorem]{Question}
\newcommand{\Z}{\mathbb{Z}}
\newcommand{\C}{\mathbb{C}}
\renewcommand\a{\mathtt{0}}
\renewcommand\b{\mathtt{1}}
\renewcommand\c{\mathtt{2}}
\renewcommand\d{\mathtt{3}}
\newcommand{\GL}{\mathrm{GL}}
\begin{document}
\title
[A {$q$}-analog of the {M}arkoff injectivity conjecture holds]
{A {$q$}-analog of the {M}arkoff\\injectivity conjecture holds}

\author[S.~Labb\'e]{S\'ebastien Labb\'e}
\address[S.~Labb\'e]{Univ. Bordeaux, CNRS, Bordeaux INP, LaBRI, UMR 5800, F-33400, Talence, France}
\email{sebastien.labbe@labri.fr}
\urladdr{http://www.slabbe.org/}

\author[M.~Lapointe]{M\'elodie Lapointe}
\address[M.~Lapointe]{Universit\'e de Moncton,
D\'epartement de math\'ematiques et de statistique,
18 avenue Antonine-Maillet,
Moncton NB E1A 3E9, Canada}
\email{melodie.lapointe@umoncton.ca}
\urladdr{http://lapointemelodie.github.io/}

\author[W. Steiner]{Wolfgang Steiner}
\address[W.~Steiner]{Universit\'e Paris Cit\'e, CNRS, IRIF, F--75006 Paris, France}
\email{steiner@irif.fr}
\urladdr{https://www.irif.fr/~steiner}

\keywords{Markoff number; Christoffel word; $q$-analog}
\subjclass[2020]{Primary 11J06; Secondary 68R15 \and 05A30}

\begin{abstract}
 The elements of Markoff triples are given by coefficients in certain matrix products defined by Christoffel words, and the Markoff injectivity conjecture, a long-standing open problem (also known as the uniqueness conjecture), is then equivalent to injectivity on Christoffel words. A $q$-analog of these matrix products has been proposed recently, and we prove that injectivity on Christoffel words holds for this $q$-analog. The proof is based on the evaluation at $q = \exp(i\pi/3)$. Other roots of unity provide some information on the original problem, which corresponds to the case $q=1$. We also extend the problem to arbitrary words and provide a large family of pairs of words where injectivity does not hold.
\end{abstract}

\maketitle

\section{Introduction}

Christoffel words are words over the alphabet $\{\a,\b\}$ that can be defined
recursively as follows: $\a$, $\b$ and $\a\b$ are Christoffel words and if
$u,v,uv\in\{\a,\b\}^*$ are Christoffel words then $uuv$ and $uvv$ are Christoffel
words \cite{MR2464862}. The shortest Christoffel words are:
\[
    \a,\b,
    \a\b,
    \a\a\b,\a\b\b,
    \a\a\a\b,\a\a\b\a\b,\a\b\a\b\b,\a\b\b\b,
    \a\a\a\a\b,\a\a\a\b\a\a\b,\a\a\b\a\a\b\a\b,\a\a\b\a\b\a\b,
    \ldots
\]
Note that these are also named \emph{lower} Christoffel words.

A Markoff triple is a positive solution of the Diophantine equation
$x^2+y^2+z^2=3xyz$ \cite{MR1510073,M1879}.
Markoff triples can be defined recursively as follows:
$(1,1,1)$, $(1,2,1)$ and $(1,5,2)$ are Markoff triples and if
$(x,y,z)$ is a Markoff triple with $y\geq x$ and $y\geq z$, then
$(x,3xy-z,y)$ and $(y,3yz-x,z)$ are Markoff triples.
A list of small Markoff numbers (elements of a Markoff triple) is
\[
    1,2,5,13,29,34,89,169,194,233,433,610,985,1325,1597,2897,4181,
    \ldots
\]
referenced as sequence \href{https://oeis.org/A002559}{A002559} in OEIS \cite{OEISA005132}.

It is known that each Markoff number can be expressed in terms of a Christoffel
word.
More precisely, 
let $\mu$ be the monoid homomorphism $\{\a,\b\}^* \rightarrow \GL_2(\mathbb{Z})$ defined by 
\[
    \mu(\a) = \begin{pmatrix} 2 & 1 \\ 1 & 1 \end{pmatrix}
\quad
\text{ and }
\quad
    \mu(\b) = \begin{pmatrix} 5 & 2 \\ 2 & 1 \end{pmatrix}.
\] 
Each Markoff number is equal to $\mu(w)_{12}$ for some Christoffel word $w$ 
\cite{MR2534916}, 
where
$M_{12}$ denotes
the element above the diagonal in a matrix 
$M=\left(\begin{smallmatrix}M_{11}&M_{12}\\M_{21}&M_{22}\end{smallmatrix}\right)\in\GL_2(\mathbb{Z})$.

For example, the Markoff number 194 is associated with the Christoffel word $\a\a\b\a\b$
as it is the entry at position $(1,2)$ in the matrix
\begin{align*}
\mu(\a\a\b\a\b)
&=
\begin{pmatrix} 2 & 1 \\ 1 & 1 \end{pmatrix}
\begin{pmatrix} 2 & 1 \\ 1 & 1 \end{pmatrix}
\begin{pmatrix} 5 & 2 \\ 2 & 1 \end{pmatrix}
\begin{pmatrix} 2 & 1 \\ 1 & 1 \end{pmatrix}
\begin{pmatrix} 5 & 2 \\ 2 & 1 \end{pmatrix}
=
\begin{pmatrix} 463 & 194 \\ 284 & 119 \end{pmatrix}.
\end{align*}
Whether the map $w\mapsto\mu(w)_{12}$ provides a bijection between Christoffel
words and Markoff numbers is a question (stated differently in \cite{F1913})
that has remained open for more than 100 years \cite{MR3098784}.
The conjecture can be expressed in terms of the
injectivity of the map $w\mapsto\mu(w)_{12}$ \cite[\S 3.3]{MR3887697}.

\begin{conjecture}[Markoff Injectivity Conjecture]
    The map $w\mapsto\mu(w)_{12}$ is injective on the set of Christoffel words.
\end{conjecture}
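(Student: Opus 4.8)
The plan is to reduce the conjecture to a statement about traces and then to attack the reduced problem by combining the tree structure of Christoffel words with the paper's $q$-analog. The starting point is the classical Cohn identity
\[
  \mathrm{tr}\,\mu(w) \;=\; 3\,\mu(w)_{12},
\]
valid for every Christoffel word $w$; one checks it on the letters ($\mathrm{tr}\,\mu(\a)=3=3\cdot1$, $\mathrm{tr}\,\mu(\b)=6=3\cdot2$) and propagates it by induction along the Christoffel recursion $u,v\mapsto uuv,uvv$, using $\det\mu(w)=1$ to control the diagonal. Granting this, $w\mapsto\mu(w)_{12}$ is injective on Christoffel words if and only if $w\mapsto\mathrm{tr}\,\mu(w)$ is, so I would work with traces throughout. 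Since each $\mu(w)$ is a hyperbolic element of $\mathrm{SL}_2(\Z)$, its trace equals $2\cosh(\ell_w/2)$ for the length $\ell_w$ of the associated closed geodesic on the modular surface, and the conjecture becomes the assertion that distinct Christoffel words give geodesics of distinct lengths --- the Frobenius form of Markoff uniqueness.

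The next, genuinely tractable, step is monotonicity along chains of the Stern--Brocot/Markoff tree. All entries of $\mu(\a)$ and $\mu(\b)$ are positive, so left- or right-multiplying a positive matrix by either strictly increases every entry; hence along any path descending from the root the value $\mu(w)_{12}$ strictly increases. This proves injectivity when $w,w'$ are comparable in the tree order, and a refinement --- that at a fixed length $n$ the value is strictly monotone in the slope $p/q$ with $p+q=n$ and $\gcd(p,q)=1$ --- would dispose of all same-length collisions. I would prove these by the same positivity/induction bookkeeping used for the trace identity.

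The hard part, and the reason the conjecture is open, is excluding collisions $\mu(w)_{12}=\mu(w')_{12}$ for words $w,w'$ that are incomparable in the tree, typically of different lengths: no total order on Christoffel words is known along which the value is monotone, and positivity alone gives only the partial tree order. This is exactly where I would try to deploy the $q$-analog proved in this paper. Writing $f_q(w)$ for the $q$-analog of $\mu(w)_{12}$, so that $f_1(w)=\mu(w)_{12}$, the paper shows that $w\mapsto f_q(w)$, viewed as a map into polynomials in $q$, is injective on Christoffel words, with separation achieved already by evaluating at $\zeta=\exp(i\pi/3)$. The dream would be to recover injectivity of $f_1$ from that of $f_\zeta$: for instance, by showing that $\mu(w)_{12}$ together with a finite jet $\bigl(f_q(w),\,\partial_q f_q(w),\dots\bigr)\big|_{q=1}$ is a strictly monotone tree invariant, or that a collision at $q=1$ forces a collision of the entire polynomial.

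I expect this final step to be the true obstacle, for a structural reason the abstract already hints at: specialising $q\mapsto1$ destroys precisely the arithmetic information that the root-of-unity evaluation exploits, since two distinct $q$-analogs $f_q(w)\neq f_q(w')$ may perfectly well agree at $q=1$. Thus the $q$-result yields only partial information about the original ($q=1$) problem --- matching what one can extract from roots of unity other than $\zeta$ --- rather than a proof. Closing the gap would require an independent invariant, either arithmetic (e.g.\ congruence obstructions on Markoff numbers) or dynamical (e.g.\ a simplicity argument for the length spectrum of the once-punctured torus), that is simultaneously monotone across incomparable branches and already visible at $q=1$; supplying such an invariant is the crux I do not expect the elementary tree-and-$q$-analog machinery to provide by itself.
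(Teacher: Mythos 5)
You have not given a proof, and to your credit you say so: your final paragraph explicitly concedes that the crux --- excluding collisions between Christoffel words incomparable in the tree --- is left unsupplied. That is the correct assessment, because the statement you were assigned is the Markoff Injectivity Conjecture itself, which the paper states as an open problem (open for over a century) and does \emph{not} prove; the paper's actual theorem is only the $q$-analog, injectivity of $w \mapsto \mu_q(w)_{12}$ as a map into $\Z[q^{\pm1}]$. So the verdict is: genuine gap, indeed the entire problem. Within your sketch, the sound parts are the trace identity $\mathrm{tr}\,\mu(w) = 3\,\mu(w)_{12}$ on Christoffel words (it checks on $\mu(\a)$, $\mu(\b)$, and e.g.\ $\mathrm{tr}\,\mu(\a\a\b\a\b) = 463 + 119 = 3 \cdot 194$) and the observation that positivity of entries gives strict monotonicity along descending chains of the Christoffel tree, hence injectivity on comparable pairs. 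The asserted same-length monotonicity in the slope is plausible but not proved by the ``same bookkeeping'': it compares products of the \emph{same} matrices in \emph{different} orders, which entrywise positivity alone does not control, so even that intermediate claim needs a real argument.

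Your diagnosis of why the $q$-analog cannot be specialized to $q=1$ is exactly right, and it is worth making the mechanism precise against the paper's proof. The paper's argument for the $q$-analog runs: evaluation at $\zeta_6 = e^{i\pi/3}$ gives $\mu_{\zeta_6}(w)_{12} = \zeta_6^{|w|+|w|_\b}\bigl(|w| - (|w|+|w|_\b)\zeta_6\bigr)$, whose argument locates $|w|+|w|_\b \bmod 6$ in one of six cones and whose coordinates then recover the full abelianization $(|w|_\a, |w|_\b)$; uniqueness of each rational $|w|_\b/|w|_\a$ in the Stern--Brocot tree finishes. Every step of this dies at $q=1$: the image is a single positive integer, the cone structure collapses, and the abelianization is not recoverable --- which is precisely your point that distinct polynomials $f_q(w) \neq f_q(w')$ may agree at $q=1$. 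The paper itself confirms that root-of-unity evaluations yield only congruence information on $\mu_1(w)_{12}$ modulo $2,3,4,5$, not injectivity. So your proposal is best read as a correct explanation of why this statement remains a conjecture; as a proof it fails at exactly the step you flag, and no argument in the paper fills it either.
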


In \cite{MR4073883}, a $q$-analog of rational numbers and of
continued fractions were introduced.
This was the inspiration for several advances
\cite{MR4266256,Ovsienko2021,MR4407768,MR4462940,MR4499341}
and among them a $q$-analog of Markoff triples \cite{kogiso_q-deformations_2020}.
A $q$-analog of the matrices $\mu(\a)$ and $\mu(\b)$ was proposed in \cite{MR4265544}, which
in terms of
\[
L_q = \begin{pmatrix}q&0\\q&1\end{pmatrix}
    \quad
    \text{ and }
    \quad
R_q = \begin{pmatrix}q&1\\0&1\end{pmatrix},
\]
can be written as
\begin{align*}
    \mu_q(\a)
    &=R_qL_q
   =
\begin{pmatrix}
    q + q^{2} & 1 \\
    q & 1
\end{pmatrix},\\
    \mu_q(\b)
    &=R_qR_qL_qL_q
   =
\begin{pmatrix}
    q + 2q^2+q^3+q^4 & 1 + q \\
    q + q^{2} & 1
\end{pmatrix}.
\end{align*}
It extends to a morphism of monoids $\mu_q:\{\a,\b\}^*\to\GL_2(\Z[q^{\pm 1}])$.
This $q$-analog satisfies that $\mu_1(w)=\mu(w)$ for every $w\in\{\a,\b\}^*$.
Thus if $w$ is a Christoffel word, then
the entry above the diagonal $\mu_q(w)_{12}$ is a polynomial
of indeterminate $q$ 
with nonnegative integer coefficients such that 
it is a Markoff number when evaluated at $q=1$.
For example,
\[
\mu_q(\a\a\b\a\b)_{12}
= 1 + 4 q + 10 q^{2} + 18 q^{3} + 27 q^{4} + 33 q^{5} + 33 q^{6} + 29 q^{7} +
    21 q^{8} + 12 q^{9} + 5 q^{10} + q^{11}
\]
which, when evaluated at $q=1$, is equal to
\[
\mu_1(\a\a\b\a\b)_{12}
= 1 + 4 + 10 + 18 + 27 + 33 + 33 + 29 + 21 + 12 + 5 + 1 = 194.
\]

In \cite{MR4405998}, a $q$-analog of the Markoff Injectivity Conjecture was considered based on
the map $w\mapsto\mu_q(w)_{12}$. It was proved that the map is injective over
the language of any fixed Christoffel word,
extending a result proved when $q=1$ \cite{MR4281387}.
In this work, we go one step further and prove
a $q$-analog of the Markoff Injectivity Conjecture.

\begin{theorem}\label{thm:main}
    The map $w\mapsto\mu_q(w)_{12}$ is injective on the set of Christoffel words.
\end{theorem}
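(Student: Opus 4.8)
The plan is to exploit the hint in the abstract: evaluate $\mu_q$ at a root of unity and extract an injective invariant. Setting $q=\zeta=\exp(i\pi/3)$, so that $\zeta^2-\zeta+1=0$ and $\zeta$ is a primitive sixth root of unity, the matrices $\mu_\zeta(\a)$ and $\mu_\zeta(\b)$ become concrete matrices in $\GL_2(\C)$. First I would compute these specializations explicitly and try to show that they generate a well-understood group (likely finite modulo scalars, or conjugate to rotations), so that the map $w \mapsto \mu_\zeta(w)$ has a rigid combinatorial structure. The key is that while $\mu_q(w)_{12}$ is a polynomial whose value at $q=1$ loses a great deal of information, its value at $\zeta$ may retain enough arithmetic content — through its real and imaginary parts, or through the trace of $\mu_\zeta(w)$ — to separate distinct Christoffel words.

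Concretely, I would associate to each Christoffel word $w$ the complex number $\mu_\zeta(w)_{12}$ (or the pair consisting of this entry and, say, $\mathrm{tr}\,\mu_\zeta(w)$) and argue that this is an injective function on Christoffel words. The natural tool is the recursive structure of Christoffel words stated in the introduction: every Christoffel word other than $\a,\b,\a\b$ arises as $uuv$ or $uvv$ from a standard factorization $(u,v)$ of a shorter Christoffel word $uv$. I would set up an induction on word length, maintaining as inductive hypothesis a statement about the pair of matrices $\big(\mu_\zeta(u),\mu_\zeta(v)\big)$ attached to the standard factorization — for instance that a certain derived quantity (a continued-fraction-like invariant, or the argument/modulus of $\mu_\zeta(u)_{12}$ relative to $\mu_\zeta(v)_{12}$) strictly increases or is strictly monotone along the Christoffel tree. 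Monotonicity of such an invariant along the Stern–Brocot / Christoffel tree would immediately give injectivity, since two distinct Christoffel words occupy distinct nodes.

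Since Theorem~\ref{thm:main} upgrades the per-language injectivity of \cite{MR4405998}, I expect the real work is passing from ``injective on each fixed Christoffel language'' to ``injective across all Christoffel words.'' The evaluation at $\zeta$ should serve precisely to distinguish words of different combinatorial type (different slopes in the Stern–Brocot tree), while the earlier result handles coincidences within a single type. I would therefore split the argument: if $\mu_q(w)_{12}=\mu_q(w')_{12}$ as polynomials, then in particular they agree at $q=\zeta$; from the root-of-unity evaluation I would deduce that $w$ and $w'$ have the same slope (lie in the same Christoffel language), and then invoke \cite{MR4405998} to conclude $w=w'$.

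The main obstacle I anticipate is establishing the right invariant at $q=\zeta$ and proving its strict monotonicity along the tree. The entries $\mu_\zeta(w)_{12}$ are algebraic numbers in $\Z[\zeta]=\Z[\exp(i\pi/3)]$, and controlling them under the nonlinear tree recursion (where lengths roughly add in a Fibonacci-like manner) requires a delicate estimate — one must rule out the accidental cancellations that make the $q=1$ problem hard in the first place. The leverage of choosing this particular root of unity is that $\zeta^2-\zeta+1=0$ collapses the matrix products into a tractable form (the entries should lie in a $2$-dimensional lattice whose geometry is rigid), so I expect the decisive lemma to be a purely algebraic identity showing that $\mu_\zeta(\a)$ and $\mu_\zeta(\b)$ generate a group in which the $(1,2)$-entry, or the trace, behaves monotonically; verifying that no two Christoffel words collide then reduces to tracking this single quantity up the Christoffel tree.
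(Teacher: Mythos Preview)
Your instinct to specialize at $\zeta_6=\exp(i\pi/3)$ is exactly the paper's move, and your two-step outline --- first show the evaluation at $\zeta_6$ pins down the slope, then finish --- matches the paper's structure. But you over-anticipate the difficulty and miss the decisive observation, and the second step you propose is not the one that is needed.

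The key lemma is not a monotonicity estimate along the Christoffel tree but a closed form valid for \emph{every} $w\in\{\a,\b\}^*$:
\[
\mu_{\zeta_6}(w)_{12}=\zeta_6^{|w|+|w|_\b}\big(|w|-(|w|+|w|_\b)\,\zeta_6\big),
\]
proved by a straightforward induction on $|w|$. In other words, at $q=\zeta_6$ the monoid generated by $\mu_{\zeta_6}(\a)$ and $\mu_{\zeta_6}(\b)$ is the abelianization of $\{\a,\b\}^*$: the entire matrix $\mu_{\zeta_6}(w)$ depends only on the Parikh vector $(|w|_\a,|w|_\b)$. A short geometric argument (the nonzero values lie in six disjoint cones indexed by $|w|+|w|_\b\bmod 6$, and within a cone the coordinates in the basis $\{1,\zeta_6\}$ are $|w|$ and $|w|+|w|_\b$) shows this formula is injective in $(|w|,|w|_\b)$. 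So $\mu_{\zeta_6}(w)_{12}$ recovers $|w|_\a$ and $|w|_\b$ exactly --- not merely the slope.

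The finish is then immediate and does \emph{not} use \cite{MR4405998}: the map $w\mapsto |w|_\b/|w|_\a$ is already a bijection from Christoffel words to positive rationals (the Christoffel tree is isomorphic to the Stern--Brocot tree), so two Christoffel words with the same Parikh vector are equal. There is no ``same language, then invoke per-language injectivity'' step; once you know $(|w|_\a,|w|_\b)$ you know the Christoffel word outright. Hence no delicate estimate, no tree-recursion control of an invariant, and no accidental-cancellation obstacle: the choice $q=\zeta_6$ collapses $\mu_q$ to an abelian image, and that is the entire content of the proof.
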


Theorem~\ref{thm:main} is proved in Section~\ref{sec:proof-thm-main}.
In Section~\ref{sec:muq-not-injective},
we give examples where the map $w\mapsto\mu_q(w)_{12}$ is not injective
when considered on the language $\{\a,\b\}^*$.

\subsection*{Acknowledgments}
This work was supported by the Agence Nationale de la Recherche through the
project Codys (ANR-18-CE40-0007) and IZES (ANR-22-CE40-0011). The second author
acknowledges the support of the Natural Sciences and Engineering Research
Council of Canada (NSERC), [funding reference number BP–545242–2020] and the
support of the Fonds de Recherche du Qu\'ebec en Science et Technologies.
We are thankful to the anonymous referee for their valuable comments.

\section{Proof of Theorem~\ref{thm:main}}\label{sec:proof-thm-main}

The main idea of this section is to evaluate the polynomial 
$\mu_q(w)_{12}$ at primitive root of unity $\zeta_k = \exp(2\pi i/k)$, in
particular when $k=6$.  

First, we observe that when $w\in\{\a,\b\}^*$, the matrix $\mu_{\zeta_6}(w)$
can be expressed in terms of $\zeta_6$, the length $|w|$ of $w$ and the number
$|w|_\b$ of occurrences of~$\b$ in $w$.

\begin{lemma}\label{lem:mu_q_evaluated_at_q6}
    For every $w\in\{\a,\b\}^*$,
    we have 
    \begin{equation}\label{e:mu-q6}
        \mu_{\zeta_6}(w)
        = \zeta_6^{|w|+|w|_\b}
        \left[
        \begin{pmatrix}
            |w|   & \hspace{-.75em} -|w|{-}|w|_\b \\
            -|w|_\b & \hspace{-.75em} -|w|
        \end{pmatrix}\zeta_6 + 
        \begin{pmatrix}
            |w|_\b  & \hspace{-.25em} |w| \\
            |w| {+} |w|_\b & \hspace{-.25em} {-}|w|_\b
        \end{pmatrix}
        +
        \begin{pmatrix}
            1 & \hspace{-.25em}  0\\
            0 & \hspace{-.25em}  1
        \end{pmatrix}
        \right].
    \end{equation}
\end{lemma}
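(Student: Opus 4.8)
The plan is to induct on the length $|w|$, using that $\mu_q$ is a homomorphism of monoids and that both $|w|$ and $|w|_\b$ are additive under concatenation, so that appending a single letter shifts the pair $(|w|,|w|_\b)$ by $(1,0)$ or $(1,1)$. The preliminary step is to record the arithmetic of $\zeta_6$, namely $\zeta_6^2 = \zeta_6 - 1$ and $\zeta_6^3 = -1$, which lets one reduce every polynomial in $\zeta_6$ to the form $a + b\,\zeta_6$ with $a,b\in\Z$. Evaluating the two generators this way gives
\[
  \mu_{\zeta_6}(\a) = \begin{pmatrix} 2\zeta_6 - 1 & 1 \\ \zeta_6 & 1 \end{pmatrix},
  \qquad
  \mu_{\zeta_6}(\b) = \begin{pmatrix} 2\zeta_6 - 3 & \zeta_6 + 1 \\ 2\zeta_6 - 1 & 1 \end{pmatrix}.
\]

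The key is to reformulate the right-hand side of \eqref{e:mu-q6}. Writing $n = |w|$ and $m = |w|_\b$, the bracketed matrix is affine in $(n,m)$ and equals $I + nX + mY$, where
\[
  X = \begin{pmatrix} \zeta_6 & 1 - \zeta_6 \\ 1 & -\zeta_6 \end{pmatrix},
  \qquad
  Y = \begin{pmatrix} 1 & -\zeta_6 \\ 1 - \zeta_6 & -1 \end{pmatrix}.
\]
In these terms the generators become $\mu_{\zeta_6}(\a) = \zeta_6(I + X)$ and $\mu_{\zeta_6}(\b) = \zeta_6^2(I + X + Y)$, so the statement to prove reads $\mu_{\zeta_6}(w) = \zeta_6^{n+m}(I + nX + mY)$. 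The crux is the collection of nilpotency relations $X^2 = Y^2 = XY = YX = 0$, which I would check by four direct $2\times 2$ multiplications, each entry vanishing exactly because $\zeta_6^2 = \zeta_6 - 1$.

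Granting these relations the induction closes immediately: appending $\a$ gives $(I + nX + mY)(I + X) = I + (n+1)X + mY$ because $X^2 = YX = 0$, and appending $\b$ gives $(I + nX + mY)(I + X + Y) = I + (n+1)X + (m+1)Y$ because all four products of $X,Y$ vanish; the scalar prefactors combine as $\zeta_6^{n+m}\zeta_6 = \zeta_6^{(n+1)+m}$ and $\zeta_6^{n+m}\zeta_6^2 = \zeta_6^{(n+1)+(m+1)}$, matching the predicted exponent, while the base case $w = \varepsilon$ gives $I$. Equivalently, and more slickly, one can avoid induction altogether: factoring scalars out of $\mu_{\zeta_6}(w) = \prod_i \mu_{\zeta_6}(w_i)$ yields $\zeta_6^{n+m}\prod_i(I + N_i)$ with each $N_i \in \{X,\, X+Y\}$, and since every $N_i$ lies in the span of $X$ and $Y$, any product $N_{i_1} N_{i_2}\cdots$ of two or more of them vanishes, so $\prod_i(I + N_i) = I + \sum_i N_i = I + nX + mY$.

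I expect the only genuine content to be the four nilpotency identities; everything else is additive bookkeeping in $(n,m)$ and in the exponent of $\zeta_6$. The phenomenon is special to this root of unity: the cyclotomic relation $\zeta_6^2 - \zeta_6 + 1 = 0$ is precisely what forces $X$ and $Y$ to be nilpotent and to annihilate one another, which is the rigidity that the evaluation at $k = 6$ exploits and that is absent at other roots of unity.
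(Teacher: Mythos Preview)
Your proof is correct. Both you and the paper argue by induction on $|w|$, so the overall skeleton is the same; the difference is in how the induction step is organized. The paper simply multiplies the claimed matrix for $w$ by $\mu_{\zeta_6}(\a)$ and by $\mu_{\zeta_6}(\b)$ and checks entrywise that the result matches the claimed matrix for $w\a$ and $w\b$. You instead rewrite the bracket as $I+nX+mY$ and isolate the algebraic content as the four identities $X^2=Y^2=XY=YX=0$; once those are checked, the induction step is a one-line expansion, and your alternative non-inductive argument (expanding $\prod_i(I+N_i)$ and observing that all products of two or more $N_i$'s vanish) makes the affineness in $(|w|,|w|_\b)$ transparent. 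This buys a clearer explanation of \emph{why} the formula is linear in the letter counts, and it also makes visible that the phenomenon is tied to the cyclotomic relation $\zeta_6^2=\zeta_6-1$, which is what forces the vanishing. In fact your $X$ and $Y$ satisfy $Y=\zeta_6^{-1}X$, so a single relation $X^2=0$ already implies all four; you could note this as a further simplification. The paper's version, by contrast, is a self-contained computation requiring no auxiliary lemmas.
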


\begin{proof}
    The proof is done by recurrence on the length of $w$.
    We have $\mu_{\zeta_6}(\varepsilon)=\left(\begin{smallmatrix}1&0\\0&1\end{smallmatrix}\right)$.
    Thus, the formula works for $w=\varepsilon$.
If \eqref{e:mu-q6} holds for~$w$, then we have
\[
\begin{split}
\mu_{\zeta_6}(w\a) 
& = \zeta_6^{|w|+|w|_\b} 
    \begin{pmatrix}
        |w|_\b + 1 + |w|\, \zeta_6 &
        |w|-(|w|{+}|w|_\b)\, \zeta_6  \\ 
        |w| + |w|_\b-|w|_\b\, \zeta_6 &
        1  - |w|_\b - |w|\, \zeta_6
    \end{pmatrix} 
    \zeta_6
    \begin{pmatrix}1{+}\zeta_6& 1{-}\zeta_6 \\ 1 & 1{-} \zeta_6 \end{pmatrix} \\
& = \zeta_6^{|w|+|w|_\b+1} 
    \begin{pmatrix}
        |w|_\b + 1 + (|w|{+}1)\, \zeta_6& 
        |w|+1-(|w|{+}|w|_\b{+}1)\, \zeta_6  \\
        |w| + |w|_\b +1 -|w|_\b\, \zeta_6 & 
        1 - |w|_\b-(|w|{+}1)\, \zeta_6 
    \end{pmatrix},\\
\mu_{\zeta_6}(w\b) 
& = \zeta_6^{|w|+|w|_\b} 
    \begin{pmatrix}
        |w|_\b + 1 + |w|\, \zeta_6 &
        |w|-(|w|{+}|w|_\b)\, \zeta_6  \\ 
        |w| + |w|_\b-|w|_\b\, \zeta_6 &
        1  - |w|_\b - |w|\, \zeta_6
    \end{pmatrix} 
    \zeta_6^2
    \begin{pmatrix}2{+}\zeta_6 & 1{-}2\zeta_6 \\ 2{-}\zeta_6 & - \zeta_6 \end{pmatrix} \\
& = \zeta_6^{|w|+|w|_\b+2} 
    \begin{pmatrix}
        |w|_\b + 2 + (|w|{+}1)\, \zeta_6 & 
        |w|+1 -(|w|{+}|w|_\b{+}2)\, \zeta_6 \\
        |w| + |w|_\b +2 -(|w|_\b{+}1)\, \zeta_6 & 
         - |w|_\b - (|w|{+}1)\, \zeta_6
    \end{pmatrix},
\end{split}
\]
hence \eqref{e:mu-q6} holds for $w\a$ and~$w\b$. 
\end{proof}

%

In particular, Equation~\eqref{e:mu-q6} implies that the entry above the diagonal is
\begin{equation}\label{eq:mu-q6-12}
    \mu_{\zeta_6}(w)_{12}
        = \zeta_6^{|w|+|w|_\b}\, \big(|w|-(|w|+|w|_\b) \zeta_6 \big)
        \in\C.
\end{equation}
The next result shows that when $w\in\{\a,\b\}^*\setminus\{\varepsilon\}$, the number
$\mu_{\zeta_6}(w)_{12}$ lies in one of the six cones of angle $\frac{\pi}{3}$ that
partition the complex plane according to the value of $|w|+|w|_\b$, see Figure~\ref{fig:6-cones}.

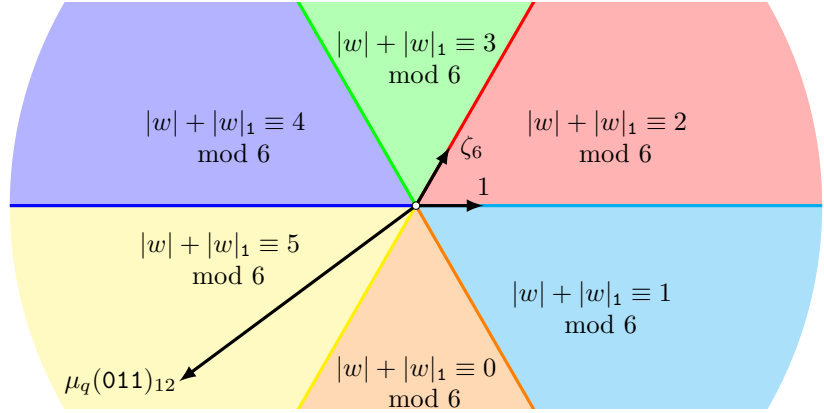
\begin{figure}
    \begin{tikzpicture}[scale=.9]
    \clip (-6,-3) rectangle (6,3);
    \fill[red!30]     (0,0) --  (60:6)  arc (60:0  :6);
    \fill[green!30]   (0,0) -- (120:6) arc (120:60 :6);
    \fill[blue!30]    (0,0) -- (180:6) arc (180:120:6);
    \fill[yellow!30]  (0,0) -- (240:6) arc (240:180:6);
    \fill[orange!30]  (0,0) -- (300:6) arc (300:240:6);
    \fill[cyan!30]    (0,0) -- (360:6) arc (360:300:6);
    \draw[very thick,red]     (0,0) --  (60:6);
    \draw[very thick,green]   (0,0) -- (120:6);
    \draw[very thick,blue]    (0,0) -- (180:6);
    \draw[very thick,yellow]  (0,0) -- (240:6);
    \draw[very thick,orange]  (0,0) -- (300:6);
    \draw[very thick,cyan]    (0,0) -- (360:6);
    \draw[very thick,-latex] (0,0) -- (60:1) node[right] {$\zeta_6$};
    \draw[very thick,-latex] (0,0) -- ( 0:1) node[above] {$1$};
    \node[align=center] at  (20:3)   {$|w|+|w|_\b \equiv 2$\\$\mod 6$};
    \node[align=center] at  (90:2.2) {$|w|+|w|_\b \equiv 3$\\$\mod 6$};
    \node[align=center] at (160:3)   {$|w|+|w|_\b \equiv 4$\\$\mod 6$};
    \node[align=center] at (195:3)   {$|w|+|w|_\b \equiv 5$\\$\mod 6$};
    \node[align=center] at (270:2.6) {$|w|+|w|_\b \equiv 0$\\$\mod 6$};
    \node[align=center] at (330:3)   {$|w|+|w|_\b \equiv 1$\\$\mod 6$};
    \draw[very thick,-latex] (0,0) -- (-3.5, -2.59807621135332) node[left=-3pt] {$\mu_q(\a\b\b)_{12}$};
    \node[circle,draw=black,fill=white,inner sep=1pt] at (0,0) {};
\end{tikzpicture}
    \caption{
        A partition of the complex plane $\C\setminus\{0\}$ into six disjoint cones
        spanned by the vectors $\zeta_6^k$ and $\zeta_6^{k+1}$, $k\in\{0,1,2,3,4,5\}$.
        For every $w\in\{\a,\b\}^*\setminus\{\varepsilon\}$,
        $\mu_q(w)_{12}$ lies in the cone corresponding to 
        $|w|+|w|_\b \mod 6$.
        For instance,
        $\mu_q(\a\b\b)=\zeta_6^{5}(3-5\zeta_6)
                      =3\zeta_6^5-5$
        and $|w|+|w|_\b = |\a\b\b|+|\a\b\b|_\b = 3+2 \equiv 5 \mod 6$.
    }
    \label{fig:6-cones}
\end{figure}

\begin{lemma}\label{lem:6-cones-complex-plane}
    For every $w\in\{\a,\b\}^*\setminus\{\varepsilon\}$,
    we have 
    \[
        \mu_{\zeta_6}(w)_{12}\in 
        \left\{\rho\cdot e^{i\theta}
        \mid
        \rho>0,\,
        (|w|+|w|_\b+4)\tfrac{\pi}{3} 
        < \theta
        \leq (|w|+|w|_\b+5) \tfrac{\pi}{3}\right\}.
    \]
    Moreover, 
    $w=\varepsilon$ if and only if
        $\mu_{\zeta_6}(w)_{12}=0$.
\end{lemma}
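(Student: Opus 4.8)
The plan is to start from the explicit expression \eqref{eq:mu-q6-12} and to rewrite it as a \emph{nonnegative} real combination of two consecutive powers of $\zeta_6$. The key observation is that $\zeta_6$ is a root of the cyclotomic polynomial $x^2-x+1$, so that $\zeta_6^3=-1$, and hence $\zeta_6^4=-\zeta_6$ and $\zeta_6^5=1-\zeta_6$. Substituting these two identities into \eqref{eq:mu-q6-12} and using $|w|_\b(-\zeta_6)+|w|(1-\zeta_6)=|w|-(|w|+|w|_\b)\zeta_6$, I would obtain the clean identity
\[
    \mu_{\zeta_6}(w)_{12}
    = |w|_\b\,\zeta_6^{\,|w|+|w|_\b+4} + |w|\,\zeta_6^{\,|w|+|w|_\b+5}.
\]
Writing $s=|w|+|w|_\b$, this exhibits $\mu_{\zeta_6}(w)_{12}$ as a nonnegative combination of the two unit vectors $\zeta_6^{s+4}$ and $\zeta_6^{s+5}$, whose arguments are $(s+4)\tfrac{\pi}{3}$ and $(s+5)\tfrac{\pi}{3}$; since these differ by $\tfrac{\pi}{3}<\pi$, they span exactly the cone in the statement.

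The cone membership is then purely geometric. For $w\neq\varepsilon$ the coefficient $|w|$ of $\zeta_6^{s+5}$ satisfies $|w|\geq1$, while the coefficient $|w|_\b$ of $\zeta_6^{s+4}$ satisfies $|w|_\b\geq0$. If $|w|_\b=0$ (that is, $w$ is a power of $\a$), the point lies on the ray $\zeta_6^{s+5}$, i.e.\ at the \emph{included} boundary angle $\theta=(s+5)\tfrac{\pi}{3}$; if $|w|_\b>0$, it is a strictly positive combination of both rays and so has argument strictly inside $\bigl((s+4)\tfrac{\pi}{3},(s+5)\tfrac{\pi}{3}\bigr)$. In both cases $\theta\in\bigl((s+4)\tfrac{\pi}{3},(s+5)\tfrac{\pi}{3}\bigr]$, and the excluded endpoint $(s+4)\tfrac{\pi}{3}$ is never attained because that would force $|w|=0$. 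As a sanity check on the orientation, the worked example $w=\a\b\b$ gives $s=5$ and $\mu_{\zeta_6}(\a\b\b)_{12}=2\zeta_6^{9}+3\zeta_6^{10}=-2-3\zeta_6=3\zeta_6^5-5$, which lands strictly inside the $s\equiv5$ cone, as in Figure~\ref{fig:6-cones}.

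For the final equivalence I would argue that $\zeta_6^{s+4}$ and $\zeta_6^{s+5}$ are linearly independent over $\R$ (their directions differ by $\tfrac{\pi}{3}\notin\{0,\pi\}$), so the combination above vanishes if and only if both coefficients vanish, i.e.\ $|w|=|w|_\b=0$, i.e.\ $w=\varepsilon$; conversely $\mu_{\zeta_6}(\varepsilon)$ is the identity, so its $(1,2)$ entry is~$0$. I do not expect a genuine obstacle: the whole argument reduces to the one-line algebraic identity displayed above, and the only point needing care is the endpoint bookkeeping, namely verifying that the half-open interval is oriented so that the $\a$-power case falls on the included endpoint $(s+5)\tfrac{\pi}{3}$ rather than the excluded one.
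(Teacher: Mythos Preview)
Your proof is correct and follows essentially the same approach as the paper: both start from the explicit formula \eqref{eq:mu-q6-12} and locate the argument of $\mu_{\zeta_6}(w)_{12}$ by elementary complex arithmetic. Your rewriting $\mu_{\zeta_6}(w)_{12}=|w|_\b\,\zeta_6^{s+4}+|w|\,\zeta_6^{s+5}$ is a slightly cleaner packaging than the paper's (which instead argues directly that $|w|-(|w|+|w|_\b)\zeta_6$ lies in the cone $\tfrac{4\pi}{3}<\theta\le\tfrac{5\pi}{3}$ and then rotates by $\zeta_6^{s}$), but the two are algebraically equivalent and the endpoint and vanishing analyses coincide.
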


\begin{proof}
    Let $w\in\{\a,\b\}^*\setminus\{\varepsilon\}$.
    Since $|w|+|w|_\b \geq |w|>0$, then
    observe that
    \[
        |w|-(|w|+|w|_\b) \zeta_6
        \in
        \big\{\rho\cdot e^{i\theta}
        \mid
        \rho>0,\,
        \tfrac{4\pi}{3} 
        < \theta
        \leq \tfrac{5\pi}{3}\big\}.
    \]
    Since $\zeta_6=e^{\frac{i\pi}{3}}$, 
    from Equation~\eqref{eq:mu-q6-12}, we have
    \begin{align*}
    \mu_{\zeta_6}(w)_{12}
        &= \zeta_6^{|w|+|w|_\b}\, \big(|w|-(|w|+|w|_\b) \zeta_6 \big)\\
        &\in e^{\frac{i\pi}{3}(|w|+|w|_\b)}\, 
        \big\{\rho\cdot e^{i\theta}
        \mid
        \rho>0,\,
        \tfrac{4\pi}{3} 
        < \theta
        \leq \tfrac{5\pi}{3}\big\}
        \\
        &= 
        \big\{\rho\cdot e^{i\theta}
        \mid
        \rho>0,\,
        \tfrac{(|w|+|w|_\b+4)\pi}{3} 
        < \theta
        \leq \tfrac{(|w|+|w|_\b+5)\pi}{3}\big\}.
    \end{align*}

    We have $\mu_{\zeta_6}(w)_{12} = 0$ if $w=\varepsilon$ and, from above, $\mu_{\zeta_6}(w)_{12} \ne 0$ if $w \in \{\a,\b\}^*\setminus\{\varepsilon\}$.
    Thus, if 
    $\mu_{\zeta_6}(w)_{12}=0$, then
    $w=\varepsilon$.
\end{proof}

The next result shows that we can recover the number of $\a$'s and $\b$'s
occurring in a word $w\in\{\a,\b\}^*$ from the polynomial $\mu_q(w)_{12}$
evaluated at $q=\zeta_6$.

\begin{proposition}\label{prop:recover-number-occ-letters}
    Let $w,w'\in\{\a,\b\}^*$.
    If $\mu_{\zeta_6}(w)_{12}=\mu_{\zeta_6}(w')_{12}$,
    then 
    $|w|_\a= |w'|_\a$
    and
    $|w|_\b= |w'|_\b$.
\end{proposition}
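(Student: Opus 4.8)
The plan is to extract $|w|_{\a}$ and $|w|_{\b}$ directly from the complex number $\mu_{\zeta_6}(w)_{12}$ by exploiting the explicit formula in Equation~\eqref{eq:mu-q6-12}. Writing $n = |w|$ and $m = |w|_{\b}$, we have the closed form
\[
    \mu_{\zeta_6}(w)_{12} = \zeta_6^{\,n+m}\,\big(n - (n+m)\,\zeta_6\big).
\]
The equality $\mu_{\zeta_6}(w)_{12} = \mu_{\zeta_6}(w')_{12}$ thus becomes a single equation between two such expressions, and the goal is to show it forces $(|w|_{\a}, |w|_{\b}) = (|w'|_{\a}, |w'|_{\b})$, which is equivalent to $(n,m)=(n',m')$ since $|w|_{\a} = n - m$.

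First I would handle the degenerate case $w = \varepsilon$ (equivalently $n=0$) using the last sentence of Lemma~\ref{lem:6-cones-complex-plane}: $\mu_{\zeta_6}(w)_{12}=0$ if and only if $w=\varepsilon$, so if one of $w,w'$ is empty then both entries vanish and both words are empty, giving the conclusion trivially. For the main case, assume $w,w'\in\{\a,\b\}^*\setminus\{\varepsilon\}$. Here I would read off the \emph{argument} and the \emph{modulus} of the common complex value separately. By Lemma~\ref{lem:6-cones-complex-plane}, the value lies in the half-open cone indexed by $(n+m) \bmod 6$, and these six cones are disjoint; since $w$ and $w'$ give the same nonzero value, they must land in the same cone, so $n+m \equiv n'+m' \pmod 6$. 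This pins down $n+m$ modulo $6$ but not yet its exact value.

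The main obstacle is upgrading the congruence $n+m \equiv n'+m' \pmod 6$ to an honest equality $(n,m)=(n',m')$, and for this I would use the modulus together with a finer argument within the cone. Once we know $\zeta_6^{\,n+m}$ and $\zeta_6^{\,n'+m'}$ agree as sixth roots of unity (from the cone coincidence), cancelling this common factor reduces the equation to
\[
    n - (n+m)\,\zeta_6 = n' - (n'+m')\,\zeta_6.
\]
Since $\zeta_6 = \exp(i\pi/3)$ is a nonreal algebraic number, $\{1,\zeta_6\}$ is linearly independent over $\Q$ (indeed over $\R$, as $\zeta_6\notin\R$), so comparing the two coefficients in this $\R$-basis gives $n = n'$ and $n+m = n'+m'$, whence $m = m'$. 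Therefore $|w|_{\a}=|w|_{\b}$-data matches: $|w|_{\a} = n-m = n'-m' = |w'|_{\a}$ and $|w|_{\b}=m=m'=|w'|_{\b}$, completing the proof. The only subtle point to check carefully is that the cone coincidence genuinely forces equality of the sixth roots $\zeta_6^{\,n+m}=\zeta_6^{\,n'+m'}$ before cancellation; the half-open cone description in Lemma~\ref{lem:6-cones-complex-plane} is designed precisely to make the map from $(n+m)\bmod 6$ to cones a bijection, so this step is clean.
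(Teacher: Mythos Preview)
Your proof is correct and follows essentially the same approach as the paper: handle the empty case via Lemma~\ref{lem:6-cones-complex-plane}, use the disjoint cones to obtain $|w|+|w|_\b \equiv |w'|+|w'|_\b \pmod 6$, cancel the common root-of-unity factor, and then read off $n=n'$ and $n+m=n'+m'$ from the $\R$-linear independence of $1$ and~$\zeta_6$.
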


\begin{proof}
    If
    $\mu_{\zeta_6}(w)_{12}=\mu_{\zeta_6}(w')_{12}=0$,
    then from Lemma~\ref{lem:6-cones-complex-plane}, we have
    $w=\varepsilon=w'$, thus $|w|_\a=0=|w'|_\a$
    and
    $|w|_\b=0=|w'|_\b$.
    Now, assume that
    $\mu_{\zeta_6}(w)_{12}=\mu_{\zeta_6}(w')_{12}\neq 0$.
    From Lemma~\ref{lem:6-cones-complex-plane}, we have
    \begin{align*}
        \mu_{\zeta_6}(w)_{12}&\in 
        \big\{\rho\cdot e^{i\theta}
        \mid
        \rho>0,\,
        (|w|+|w|_\b+4)\tfrac{\pi}{3} 
        < \theta
        \leq (|w'|+|w'|_\b+5)\tfrac{\pi}{3}\big\},\\
        \mu_{\zeta_6}(w')_{12}&\in 
        \big\{\rho\cdot e^{i\theta}
        \mid
        \rho>0,\,
        (|w'|+|w'|_\b+4)\tfrac{\pi}{3} 
        < \theta
        \leq (|w'|+|w'|_\b+5)\tfrac{\pi}{3}\big\},
    \end{align*}
    which are two disjoint cones in the complex plane
    when
    $|w|+|w|_\b \not\equiv |w'|+|w'|_\b \mod 6$.
    Since
    $\mu_{\zeta_6}(w)_{12}=\mu_{\zeta_6}(w')_{12}$,
    the two cones must intersect and be equal. Therefore, we have
    $|w|+|w|_\b \equiv |w'|+|w'|_\b \mod 6$.
    From Lemma~\ref{lem:mu_q_evaluated_at_q6}, we have
    \begin{align*}
    |w'|-(|w'|+|w'|_\b) \zeta_6 
        &= \zeta_6^{-|w'|-|w'|_\b}\, \mu_{\zeta_6}(w')_{12} \\
        &= \zeta_6^{-|w'|-|w'|_\b}\, \mu_{\zeta_6}(w)_{12} \\
        &= \zeta_6^{-|w'|-|w'|_\b}\, \zeta_6^{|w|+|w|_\b}\, \big(|w|-(|w|+|w|_\b) \zeta_6 \big)\\
        &= |w|-(|w|+|w|_\b) \zeta_6.
    \end{align*}
    This implies that $|w'|=|w|$ and $|w'|+|w'|_\b= |w|+|w|_\b$.
    Then $|w|_\b= |w'|_\b$ and
    $|w|_\a=|w|-|w|_\b=|w'|-|w'|_\b= |w'|_\a$.
\end{proof}

We may now prove the main result. It is based on the isomorphism between the tree
of Christoffel words and the Stern--Brocot tree, a tree of positive rational numbers.
Indeed, the set of Christoffel words has the structure of a binary tree:
    if $u,v,uv\in\{\a,\b\}^*$ are Christoffel words, then $uuv$ and $uvv$ are
    the left and right children of the node $uv$ \cite[\S 3.2]{MR2464862}.
    The Christoffel tree is isomorphic to the Stern--Brocot tree via the map
    that associates to a vertex $w$ of the Christoffel tree the
    fraction $\frac{|w|_\b}{|w|_\a}$ \cite[Proposition 7.6]{MR2464862}.

\begin{proof}[Proof of Theorem~\ref{thm:main}]
    We want to show the injectivity of the map 
    $w\mapsto\mu_q(w)_{12}$ over the set of Christoffel words.
    Let $w,w'\in\{\a,\b\}^*$ be two Christoffel words
    such that
    $\mu_q(w)_{12}=\mu_q(w')_{12}$.
   In particular, we have
    $\mu_{\zeta_6}(w)_{12}=\mu_{\zeta_6}(w')_{12}$.
    From Proposition~\ref{prop:recover-number-occ-letters},
    $|w|_\a= |w'|_\a$ and $|w|_\b= |w'|_\b$.

    Suppose by contradiction that $w\neq w'$. This implies that the fraction
    $\frac{|w|_\b}{|w|_\a}=\frac{|w'|_\b}{|w'|_\a}$ appears twice in the
    Stern--Brocot tree. This is a contradiction because
    every positive rational number appears in the Stern--Brocot tree exactly
    once \cite[\S 4.5]{MR1397498}.
    Thus $w=w'$.
    Therefore the map $w\mapsto\mu_{\zeta_6}(w)_{12}$ is injective over the set of
    Christoffel words,
    and so is the map $w\mapsto\mu_q(w)_{12}$.
\end{proof}



\begin{remark}
The monoid generated by $\mu_{\zeta_k}(\a)$ and
$\mu_{\zeta_k}(\b)$ is a finite group if and only if $k \in \{2,3,4,5\}$.
Indeed, the monoid generated by $\zeta_k^{-1} \mu_{\zeta_k}(\a)$ and $\zeta_k^{-2} \mu_{\zeta_k}(\b)$ is isomorphic to the cyclic group~$C_3$ when $k = 2$, the quaternion group~$Q_8$ when $k = 3$, the special linear groups $\mathrm{SL}_2(\mathbb{F}_3)$ and $\mathrm{SL}_2(\mathbb{F}_5)$ when $k = 4$ and $k = 5$ respectively; since $\zeta_k^k = 1$, the corresponding monoids generated by $\mu_{\zeta_k}(\a)$ and $\mu_{\zeta_k}(\b)$ are also finite groups.
For $k = 6$, the monoid is by Lemma~\ref{lem:mu_q_evaluated_at_q6} isomorphic to the abelianization of $\{\a,\b\}^*$, i.e., $(\mathbb{N}^2,+)$. 
For $k \ge 7$, the matrix $\zeta_k^{-1} \mu_{\zeta_k}(\a)$ has an eigenvalue $>1$ since $\zeta_k + \zeta_k^{-1} > 1$ and the characteristic polynomial of $q^{-1} \mu_q(\a)$ is $x^2 - (q{+}1{+}q^{-1}) x + 1$, which implies that the generated monoid is infinite, thus the monoid generated by $\mu_{\zeta_k}(\a)$ is also infinite. 

For $k \in \{2,3,4,5\}$, we also observe the following relations between the residue class of $\mu_1(w)_{12} \pmod{k}$ and $\mu_{\zeta_k}(w)_{12}$ for $w\in\{\a,\b\}^*$ (these relations hold not only for the $12$-coefficient but for all coefficients of $\mu_1(w)$ and $\mu_{\zeta_k}(w)$ and can be verified by induction on the length of~$w$):
\begin{itemize}
    \item $\mu_1(w)_{12} \;(\bmod\;2) \equiv 
        \begin{cases}
            0 \text{ if and only if }\mu_{-1}(w)_{12} = 0,\\
            1 \text{ if and only if }\mu_{-1}(w)_{12} \in\{-1,1\},
        \end{cases}$
    \item $\mu_1(w)_{12} \;(\bmod\;3) \equiv 
        \begin{cases}
            0 \text{ if and only if }\mu_{\zeta_3}(w)_{12} = 0,\\
            1 \text{ if and only if }\mu_{\zeta_3}(w)_{12} \in \{1, \zeta_3, \zeta_3^2\},\\
            2 \text{ if and only if }\mu_{\zeta_3}(w)_{12} \in \{-1, -\zeta_3, -\zeta_3^2\},
        \end{cases}$
    \item $\mu_1(w)_{12} \;(\bmod\;4) \equiv 
        \begin{cases}
            0 \text{ if and only if }\mu_{i}(w)_{12} = 0,\\
            1 \text{ or } 3 
              \text{ if and only if }\mu_{i}(w)_{12} \in \{\pm 1,\pm i\},\\
            2 \text{ if and only if }\mu_{i}(w)_{12} \in \{1\pm i,-1\pm i\}.\\
        \end{cases}$
\end{itemize}
The value $\mu_1(w)_{12} \mod 5$ can also be deduced from $\mu_{\zeta_5}(w)_{12}$,
which takes 31 distinct values in the complex plane, see Figure~\ref{fig:mu_zeta5}.
For $k \ge 6$, we have not found relations between the residue class of $\mu_1(w)_{12} \pmod{k}$ and $\mu_{\zeta_k}(w)_{12}$.

\begin{figure}
\begin{center}
    \includegraphics[width=8cm]{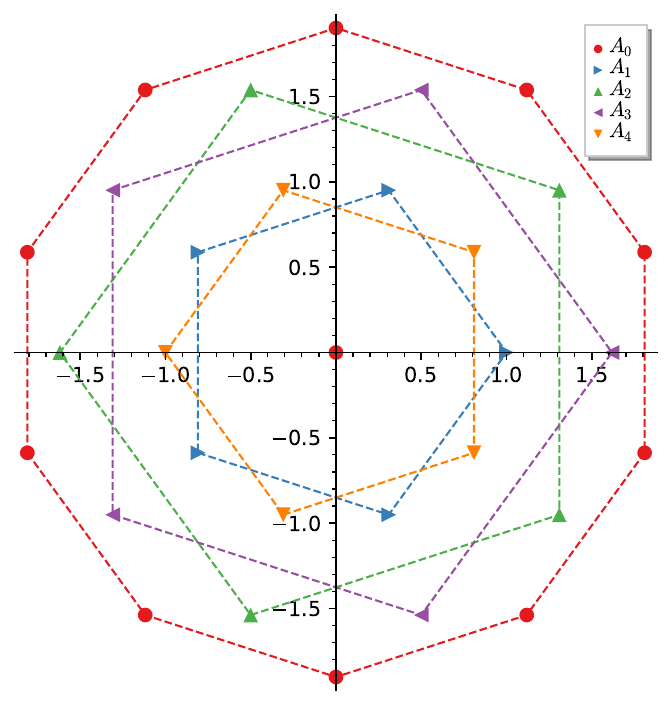}
\end{center}
    \caption{For $w\in\{\a,\b\}^*$, $\mu_{\zeta_5}(w)_{{12}}$ takes 31 different values.
    The set $A_k=\{\mu_{\zeta_5}(w)_{{12}} \mid \mu_1(w)_{{12}} \equiv k \mod 5,\, w\in\{\a,\b\}^*\}$ 
    consists of the vertices of a regular pentagon when $k \in \{1,2,3,4\}$,
    of the vertices of a regular decagon and the origin when $k=0$.}
    \label{fig:mu_zeta5}
\end{figure}
\end{remark}

\section{$w \mapsto \mu_q(w)_{12}$ is not injective on $\{\a,\b\}^*$}\label{sec:muq-not-injective}

In this section, we provide a list of pairs of words over the alphabet
$\{\a,\b\}$ for which $w \mapsto \mu_q(w)_{12}$ is not injective.
For example, $\a\a\a\b\b$ and $\a\b\a\a\b$ have the same image as we have
\begin{align*}
    \mu_q(\a\a\a\b\b)_{12}
    &=1{+}4 q{+}10 q^{2}{+}19 q^{3}{+}27 q^{4}{+}33 q^{5}{+}34 q^{6}{+}29 q^{7}{+}21 q^{8}{+}12 q^{9}{+}5 q^{10}{+}q^{11}\\
    &= \mu_q(\a\b\a\a\b)_{12}.
\end{align*}
The section contains two results:
Theorem~\ref{t:identity1}
and Theorem~\ref{t:identity2}.
All pairs of words we know of are of form
of Equation~\eqref{e:identitymu1} or Equation~\eqref{e:identitymu2}.
So we believe they completely describe the pairs
of words $x,y\in\{\a,\b\}^*$ such that $\mu_q(x)_{12}=\mu_q(y)_{12}$.

\subsection{First result}
To state the results, we need the two involutions $w \mapsto \widetilde{w}$ and $w \mapsto \overline{w}$ on $\{\a,\b\}^*$ which are defined by $\widetilde{w} = w_k \cdots w_1$ and $\overline{w} = \overline{w_k} \cdots \overline{w_1}$ if $w = w_1 \cdots w_k$, with $\overline{\a} = \b$, $\overline{\b} = \a$, i.e., $\widetilde{w}$ is the mirror image of~$w$ and $\overline{w}$ is obtained from $\widetilde{w}$ by exchanging $\a$ and~$\b$.
Also, more generally, we consider images of the homomorphism
\[
M_q:\, \{\a,\b\}^* \to \GL_2(\Z[q^{\pm1}]), \quad \a \mapsto L_q,\ \b \mapsto R_q
\]
which will be used to prove identities for $\mu_q$
since $\mu_q(\a)=M_q(\b\a)$
and   $\mu_q(\b)=M_q(\b\b\a\a)$.

\begin{theorem} \label{t:identity1}
For all $w \in \{\a,\b\}^*$, $k, m, n \ge 0$, we have
\begin{align}
M_q\big(\a^k\b w \b\a^m\big)_{12} & = M_q\big(\a^k\b \overline{w} \b\a^n\big)_{12}, \label{e:identityM1} \\[.5ex]
\mu_q(\a w \b)_{12} & = \mu_q(\a \widetilde{w} \b)_{12}. \label{e:identitymu1}
\end{align}
\end{theorem}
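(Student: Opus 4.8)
The plan is to prove the two identities in sequence, deriving the $\mu_q$-identity \eqref{e:identitymu1} as a corollary of the $M_q$-identity \eqref{e:identityM1}. The key observation is that $\mu_q$ factors through $M_q$: since $\mu_q(\a) = M_q(\b\a)$ and $\mu_q(\b) = M_q(\b\b\a\a)$, applying $\mu_q$ to a word is the same as applying $M_q$ to an inflated word where each $\a$ becomes $\b\a$ and each $\b$ becomes $\b\b\a\a$. So first I would establish \eqref{e:identityM1}, and then translate it into the $\mu_q$ language.

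For the $M_q$-identity, the right approach is induction on the word $w$, but the crucial structural feature to exploit is how conjugation by the bar-involution $w \mapsto \overline{w}$ interacts with $M_q$. Writing $J = \bigl(\begin{smallmatrix}0&1\\1&0\end{smallmatrix}\bigr)$ for the exchange matrix, I expect the relation $J L_q J = R_q$ and $J R_q J = L_q$ to hold, so that $M_q(\overline{w}) = J\, \widetilde{M_q(w)}\, J$ where $\widetilde{\cdot}$ denotes the appropriate transpose/reversal operation coming from the mirror $\widetilde{w}$. The point of the identity is that the $\a^k\b$ prefix and the $\b\a^m$ suffix act as ``buffers'': left-multiplying by $M_q(\a^k\b) = L_q^k R_q$ and right-multiplying by $M_q(\b\a^m) = R_q L_q^m$ should symmetrize the matrix product in such a way that replacing $w$ by $\overline{w}$ (and changing the suffix exponent $m$ to $n$) leaves the $12$-entry unchanged. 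Concretely, I would compute $M_q(\a^k \b w \b \a^m)$ as a product $L_q^k R_q \, M_q(w) \, R_q L_q^m$ and check that its $12$-entry depends on $M_q(w)$ only through a symmetric combination invariant under the involution; the independence from $m$ and $n$ should follow because $L_q$ is lower-triangular (its action on the right preserves the relevant column structure feeding into the $12$-entry).

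To deduce \eqref{e:identitymu1}, I would note that $\mu_q(\a w \b) = M_q$ applied to the inflation of $\a w \b$, which has the form $(\b\a)(\text{inflation of }w)(\b\b\a\a)$, a word beginning with $\b\a$ and ending in $\a\a$. The mirror $\widetilde{w}$ on the $\mu_q$-side should correspond, after inflation, precisely to the bar-involution $\overline{\cdot}$ on the $M_q$-side (since inflation conjugates the mirror map into the reversal-plus-exchange map), placing us exactly in the scope of \eqref{e:identityM1} with suitable $k, m, n$. The main obstacle I anticipate is precisely this bookkeeping: verifying that the mirror operation $\widetilde{w}$ under the inflation substitution matches the combined operation governing \eqref{e:identityM1}, and confirming that the prefix/suffix exponents produced by inflation fall within the allowed range. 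I expect the raw inductive step for \eqref{e:identityM1} to be a routine $2\times 2$ matrix computation, so the genuine content lies in identifying the correct symmetry and setting up the involution dictionary between the two homomorphisms.
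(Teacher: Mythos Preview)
Your overall architecture matches the paper's proof: establish \eqref{e:identityM1} via a conjugation identity relating $M_q(w)$ and $M_q(\overline{w})$, then deduce \eqref{e:identitymu1} by passing through the substitution $\sigma(\a)=\b\a$, $\sigma(\b)=\b\b\a\a$ and checking that $\overline{\sigma(w)}=\sigma(\widetilde{w})$. That last dictionary between the involutions is exactly right and is the crux of the second part.

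There is, however, a concrete error in the conjugation you propose. With $J=\bigl(\begin{smallmatrix}0&1\\1&0\end{smallmatrix}\bigr)$ one computes $J L_q J = \bigl(\begin{smallmatrix}1&q\\0&q\end{smallmatrix}\bigr)$, which is \emph{not} $R_q$; so the relation $J L_q J = R_q$ fails, and no choice of ``appropriate transpose/reversal'' repairs it with $J$ alone. The identity that actually works is
\[
Q_q\,L_q\,Q_q^{-1} = {}^t R_q, \qquad Q_q\,R_q\,Q_q^{-1} = {}^t L_q, \qquad Q_q = \begin{pmatrix}q&0\\0&1\end{pmatrix},
\]
giving $Q_q\,M_q(w)\,Q_q^{-1} = {}^t M_q(\overline{w})$. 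With this in hand no induction on $w$ is needed: after stripping the outer $\a$'s (your observation that $L_q$ is lower-triangular handles this, giving $M_q(\a^k u \a^m)_{12}=q^k M_q(u)_{12}$), one writes $M_q(\b w \b)_{12}$ as a $1\times 1$ matrix $(1,1)\,{}^t M_q(\overline{w})\,{}^t(q,1)$ and transposes it to get $(q,1)\,M_q(\overline{w})\,{}^t(1,1)=M_q(\b\overline{w}\b)_{12}$. So the ``symmetric combination'' you are looking for is literally the invariance of scalars under transposition, but you need $Q_q$, not $J$, to see it.
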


\begin{proof}
We have $M_q(\a^k w \a^m)_{12} = q^k M_q(w)_{12}$ for all $w \in \{\a,\b\}^*$, $k, m \ge 0$, because $(1,0)\, L_q = (q,0)$ and $L_q {}^t\!(1,0) = {}^t\!(1,0)$.
Hence, it suffices to prove \eqref{e:identityM1} for $k = m = n = 0$.
Since 
\[
Q_q\,L_q\,Q_q^{-1} = {}^t\!R_q, \quad \mbox{and} \quad  Q_q\,R_q\,Q_q^{-1} = {}^t\!L_q, \quad \mbox{with}\ Q_q = \begin{pmatrix}q&0\\0&1\end{pmatrix},
\]
we have, for $w = w_1 \cdots w_\ell \in \{\a,\b\}^*$,
\begin{equation} \label{e:bar}
Q_q\,M_q(w)\,Q_q^{-1} = {}^t\!M_q(\overline{w_1}) \cdots {}^t\!M_q(\overline{w_\ell}) = {}^t\!M_q(\overline{w_\ell} \cdots \overline{w_1}) = {}^t\!M_q(\overline{w})
\end{equation}
and thus
\[
\begin{aligned}
M_q(\b w \b)_{12} & = \big(R_q Q_q^{-1} {}^t\!M_q(\overline{w}) Q_q R_q\big)_{12} = (1,1)\, {}^t\!M_q(\overline{w})\, {}^t\!(q,1) = (q,1)\, M_q(\overline{w}) \, {}^t\!(1,1) \\
& = M_q(\b \overline{w} \b)_{12},
\end{aligned}
\]
using that $1 \times 1$ matrices are invariant under transposition.
This proves~\eqref{e:identityM1}.

Let $\sigma:\, \{\a,\b\}^* \to \{\a,\b\}^*$ be the homomorphism given by $\sigma(\a) =\b\a$ and $\sigma(\b) = \b\b\a\a$.
Then we have $\mu_q(w) = M_q(\sigma(w))$ and $\overline{\sigma(w)} = \sigma(\widetilde{w})$ for all $w \in \{\a,\b\}^*$, thus 
\[
\begin{aligned}
\mu_q(\a w \b)_{12} & = M_q\big(\b\a \sigma(w) \b\b\a\a\big)_{12} = M_q\big(\b\a \overline{\sigma(w)} \b\b\a\a\big)_{12} = M_q\big(\b\a \sigma(\widetilde{w}) \b\b\a\a\big)_{12} \\
& = \mu_q(\a \widetilde{w} \b)_{12}, 
\end{aligned}
\]
where we have used \eqref{e:identityM1} and $\overline{\a w \b} = \a \overline{w} \b$ for the second equation.
\end{proof}

Recall that if $\a w\b\in\{\a,\b\}^*$ is a Christoffel word, then $w$ is a palindrome \cite[Theorem 2.3.1]{MR3887697}.
Therefore Theorem~\ref{t:identity1} is compatible with
Theorem~\ref{thm:main}.

\subsection{Second result}

We obtain more identities by images of the homomorphisms (with $w \in \{\a,\b\}^*$)
\[
\begin{aligned}
\varphi_w:\, \{\a,\b,\c,\d\}^* \to \{\a,\b\}^*, \quad 
\a & \mapsto w\a\b\b\a\overline{w}\a\b\b\a, & \c & \mapsto  w\a\b\b\a\overline{w}\b\a\a\b, \\
\b & \mapsto w\b\a\a\b\overline{w}\b\a\a\b, & \d & \mapsto
w\b\a\a\b\overline{w}\a\b\b\a, \\[1ex]
\psi_w:\, \{\a,\b,\c,\d\}^* \to \{\a,\b\}^*, \quad 
\a & \mapsto w\a\b\widetilde{w}\a\b, & \c & \mapsto w\a\b\widetilde{w}\b\a, \\
\b & \mapsto w\b\a\widetilde{w}\b\a, & \d & \mapsto w\b\a\widetilde{w}\a\b,
\end{aligned}
\]
and the involution $w \mapsto \widehat{w}$ on $\{\a,\b,\c,\d\}^*$ defined by $\widehat{w} = \widehat{w_k} \cdots \widehat{w_1}$ if $w = w_1 \cdots w_k$, with $\widehat{\a} = \b$, $\widehat{\b} = \a$, $\widehat{\c} = \c$, $\widehat{\d} = \d$.
Note that $\widehat{w} = \overline{w}$ for $w \in \{\a,\b\}^*$.
For the proof of the following theorem, we also use the extension of $w \mapsto \overline{w}$ to  $\{\a,\b,\c,\d\}^*$ defined by $\overline{\c} = \d$, $\overline{\d} = \c$.

\begin{theorem} \label{t:identity2}
For all $w \in \{\a,\b\}^*$, $v \in \{\a,\b,\c,\d\}^*$, $k, m, n \ge 0$, we have\footnote{In the version of the article that is published in \emph{Algebraic Combinatorics}, the two equations in Theorem~\ref{t:identity2} are erroneously stated with~$\overline{v}$ instead of~$\widehat{v}$.}
\begin{align}
M_q\big(\a^k\b \varphi_w(v) w \b\a^m\big)_{12} & = M_q\big(\a^k\b \varphi_w(\widehat{v}) w \b\a^n\big)_{12}. \label{e:identityM2} \\[.5ex]
\mu_q\big(\a \psi_w(v) w \b\big)_{12} & = \mu_q\big(\a \psi_w(\widehat{v}) w \b\big)_{12}. \label{e:identitymu2}
\end{align}
\end{theorem} 

For the proof of the theorem, we decompose $\varphi_w = \eta_w \circ \tau$ with
\[
\begin{aligned}
\eta_w:\ & \{\a,\b,\c,\d\}^* \to \{\a,\b\}^*, &
\a & \mapsto w\a\b\b\a, & \c & \mapsto \overline{w}\a\b\b\a, \\
& & \b & \mapsto w\b\a\a\b, & \d & \mapsto \overline{w}\b\a\a\b, \\
\tau:\ & \{\a,\b,\c,\d\}^* \to \{\a,\b,\c,\d\}^*, & \a & \mapsto \a\c, & \c & \mapsto \a\d, \\
& & \b & \mapsto \b\d, & \d & \mapsto \b\c,
\end{aligned}
\]
and we use the homomorphism
\[
\begin{aligned}
\eta'_w:\ & \{\a,\b,\c,\d\}^* \to \{\a,\b\}^*, &
\a & \mapsto \a\b\b\a w, & \c & \mapsto \a\b\b\a \overline{w}, \\
& & \b & \mapsto \b\a\a\b w, & \d & \mapsto \b\a\a\b \overline{w}.
\end{aligned}
\]
Then
\[
\varphi_w(\widehat{v}) w = \eta_w(\tau(\widehat{v})) w = w \eta'_w\big(\,\overline{\!\tau(v)\!}\,\big)
\] 
for all $v \in \{\a,\b,\c,\d\}^*$; recall that $\overline{w}$ is defined by reversing~$w$ and exchanging $\a$ and~$\b$ as well as $\c$ and~$\d$, while $\widehat{w}$ only exchanges $\a$ and~$\b$ after reversing~$w$.

We have to show that the difference
\[
\Delta_w(v) = M_q\big(\b \eta_w(v)w \b\big)_{12} - M_q\big(\b w \eta'_w(\overline{v}) \b\big)_{12}
\]
is zero for all 
$v \in \tau(\{\a,\b,\c,\d\}^*) 
       = \{\a\c,\a\d,\b\c,\b\d\}^*
       = (\{\a,\b\}\{\c,\d\})^*$.

\begin{lemma} \label{l:Delta}
Let $a \in \{\c,\d\}$, $v \in (\{\a,\b\}\{\c,\d\})^*$, $w \in \{\a,\b\}^*$. 
If $\Delta_w(v) = 0$, then 
\[
\Delta_w(u \a \overline{u} a v) = \Delta_w(u \b \overline{u} a v)
\]
for all $u \in (\{\a,\b\}\{\c,\d\})^*$ and 
\[
\Delta_w(u \c \overline{u} a v) = \Delta_w(u \d \overline{u} a v)
\]
for all $u \in (\{\a,\b\}\{\c,\d\})^*\{\a,\b\}$.
\end{lemma}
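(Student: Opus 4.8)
The plan is to first put $\Delta_w$ into a symmetric shape. Applying \eqref{e:identityM1} in the form $M_q(\b z\b)_{12}=M_q(\b\overline z\b)_{12}$ (its case $k=m=n=0$) to the second term of $\Delta_w(v)$, with $z=w\,\eta'_w(\overline v)$, and using the letterwise identity $\overline{\eta'_w(x)}=\eta_{\overline w}(\overline x)$ (whence $\overline{\eta'_w(\overline v)}=\eta_{\overline w}(v)$ and $\overline z=\eta_{\overline w}(v)\,\overline w$), I would rewrite $\Delta_w(v)=f_w(v)-f_{\overline w}(v)$, where $f_w(v):=M_q(\b\,\eta_w(v)\,w\,\b)_{12}=(q,1)\,M_q(\eta_w(v))\,M_q(w)\,{}^t(1,1)$. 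Then $\Delta_w(v)=0$ is exactly $f_w(v)=f_{\overline w}(v)$, and both asserted equalities are equivalent to $g_w=g_{\overline w}$, where $g_w:=f_w(u\,x\,\overline u\,a\,v)-f_w(u\,y\,\overline u\,a\,v)$ with $(x,y)=(\a,\b)$ in the first case and $(x,y)=(\c,\d)$ in the second, since the difference of the two sides is $g_w-g_{\overline w}$.

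Next I would isolate two pivotal matrix facts. Writing $D:=\bigl(\begin{smallmatrix}0&-1\\q&0\end{smallmatrix}\bigr)$, a direct computation gives $M_q(\a\b\b\a)-M_q(\b\a\a\b)=(q^3+1)\,D$, whence $M_q(\eta_w(x))-M_q(\eta_w(y))$ equals $(q^3+1)M_q(w)\,D$ in the first case and $(q^3+1)M_q(\overline w)\,D$ in the second. The second fact is a collapse identity: for every $z\in\{\a,\b\}^*$, $M_q(z)\,D\,M_q(\overline z)=q^{|z|}\,D$. This holds because \eqref{e:bar} gives $Q_q\,M_q(\overline z)={}^t\!M_q(z)\,Q_q$ and $D=S\,Q_q$ with $S=\bigl(\begin{smallmatrix}0&-1\\1&0\end{smallmatrix}\bigr)$, so $M_q(z)\,D\,M_q(\overline z)=M_q(z)\,S\,{}^t\!M_q(z)\,Q_q=(\det M_q(z))\,S\,Q_q=q^{|z|}D$, using the elementary $M\,S\,{}^t\!M=(\det M)\,S$ and $\det M_q(z)=q^{|z|}$. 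In particular $M_q(w)\,D\,M_q(\overline w)=M_q(\overline w)\,D\,M_q(w)=q^{|w|}D$ and $M_q(\a\b\b\a)\,D\,M_q(\b\a\a\b)=q^4D$ (and with the two cores interchanged).

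The heart of the argument is a telescoping of $M_q(\eta_w(u))\,M_q(w)\,D\,M_q(\eta_w(\overline u))$ (first case) down the palindrome. On generators $\eta_w(x)=\epsilon_x\,C_x$, where $\epsilon_x\in\{M_q(w),M_q(\overline w)\}$ records whether $x\in\{\a,\b\}$ or $x\in\{\c,\d\}$, and $C_x\in\{M_q(\a\b\b\a),M_q(\b\a\a\b)\}$ is the ``core'', $C_{\overline x}$ being its bar partner. The constraint $u\in(\{\a,\b\}\{\c,\d\})^*$ forces $\overline u$ to begin with a letter of $\{\c,\d\}$, so its first prefix is $M_q(\overline w)$ and pairs with the central $M_q(w)\,D$ via the collapse identity. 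Peeling symmetrically from the centre, every matched pair is a prefix pair $(M_q(w),M_q(\overline w))$ or $(M_q(\overline w),M_q(w))$ or a core pair $(C_x,C_{\overline x})$, each collapsing to a power of $q$ times $D$; the product reduces to $q^{E}\,M_q(w)\,D$ with a single leftover leading prefix $M_q(w)$ and exponent $E$ depending only on $|w|$ and the number of blocks of $u$. Combining this leftover with the $M_q(\overline w)$-prefix of $\eta_w(a)$ (here $a\in\{\c,\d\}$) once more yields $q^{E+|w|}\,D\,C_a$, and a short computation gives $(q,1)\,D\,C_a=\kappa_a\,(q,1)$ for a scalar $\kappa_a$ not involving $w$. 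Hence $g_w=(q^3+1)\,q^{E+|w|}\,\kappa_a\,f_w(v)$. The second case is identical with $M_q(w)$ and $M_q(\overline w)$ exchanged at the centre, the constraint $u\in(\{\a,\b\}\{\c,\d\})^*\{\a,\b\}$ now making $\overline u$ start with a letter of $\{\a,\b\}$. Since $E$, $\kappa_a$ and the whole prefactor are invariant under $w\mapsto\overline w$, I conclude $g_w-g_{\overline w}=(q^3+1)\,q^{E+|w|}\,\kappa_a\,\bigl(f_w(v)-f_{\overline w}(v)\bigr)=(q^3+1)\,q^{E+|w|}\,\kappa_a\,\Delta_w(v)=0$, which is precisely the hypothesis.

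The hard part will be the telescoping bookkeeping. Because $\eta_w(\overline u)\neq\overline{\eta_w(u)\,w}$, the collapse identity cannot be applied to $u$ in one stroke; one must verify that the prefix and core factors of $\eta_w(u)$ and of $\eta_w(\overline u)$ genuinely meet as matched bar-pairs as the palindrome is peeled from the centre outward, and that the two parity constraints on $u$ are exactly what align the central $M_q(w)\,D$ (resp. $M_q(\overline w)\,D$) with the first prefix of $\eta_w(\overline u)$. Checking this alternation, together with the two scalar computations $(q,1)\,D\,C_a=\kappa_a(q,1)$ and $M_q(\a\b\b\a)-M_q(\b\a\a\b)=(q^3+1)D$, is where the real content lies; the reduction to $\Delta_w(v)$ is then formal.
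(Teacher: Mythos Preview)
Your argument is correct and arrives at the same key identity as the paper, namely that the difference reduces to $(q^3+1)\,q^{|\eta_w(u)w|}\,\kappa_a\,\Delta_w(v)$ with $\kappa_{\c}=-q$, $\kappa_{\d}=q^4$. The two pivotal facts you isolate --- the difference $M_q(\a\b\b\a)-M_q(\b\a\a\b)=(q^3+1)D$ and the collapse $M_q(z)\,D\,M_q(\overline z)=q^{|z|}D$ --- are exactly those used in the paper, and your verification that the alternation forced by $u\in(\{\a,\b\}\{\c,\d\})^*$ (resp.\ $(\{\a,\b\}\{\c,\d\})^*\{\a,\b\}$) makes every matched pair a bona fide $(z,\overline z)$ pair is the correct structural point.

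Two differences from the paper are worth noting. First, your initial rewriting $\Delta_w(v)=f_w(v)-f_{\overline w}(v)$ via \eqref{e:identityM1} is a genuine simplification: the paper keeps the two terms of~$\Delta_w$ in their original asymmetric form (one through~$\eta_w$, the other through~$\eta'_w$) and has to run the collapse argument twice, once for each. Your symmetrisation lets you compute only~$g_w$ and obtain~$g_{\overline w}$ by substitution, halving the bookkeeping. Second, your remark that ``the collapse identity cannot be applied to~$u$ in one stroke'' is slightly pessimistic: the paper observes that in fact $\overline{\eta_w(u)\,w}=\eta_w(\overline u)\,\overline w$ as words (this is exactly what your letter-by-letter peeling verifies), so a single application of the collapse identity with $z=\eta_w(u)\,w$ suffices. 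Your iterative peeling is of course equally valid and makes the alignment of prefixes and cores more transparent; the paper's one-shot collapse is shorter but hides this structure.
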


\begin{proof}
Assume first that $|u|$ is even. 
Then
\[
\begin{aligned}
& \Delta_w(u \a \bar{u} a v) - \Delta_w(u \b \bar{u} a v) \\
& = M_q\big(\b \eta_w(u \a\bar{u} a v) w \b\big)_{12} -
M_q\big(\b \eta_w(u \b \bar{u} a v) w \b\big)_{12} \\
& \quad + M_q\big(\b w \eta'_w(\bar{v} \bar{a} u \a \bar{u}) \b\big)_{12} - M_q\big(\b w\eta'_w(\bar{v} \bar{a} u \b \bar{u}) \b\big)_{12} \\
& = \big(M_q\big(\b \eta_w(u)w\big) \big(M_q(\a\b\b\a)-M_q(\b\a\a\b)\big) M_q\big(\eta_w(\bar{u} a v)w \b\big)\big)_{12} \\
& \quad + \big(M_q\big(\b w \eta'_w(\bar{v} \bar{a} u)\big) \big(M_q(\a\b\b\a)-M_q(\b\a\a\b)\big) M_q\big(w\eta'_w(\bar{u}) \b\big)\big)_{12} \\
& = (q^3+1)\, \big(M_q\big(\b \eta_w(u)w\big) S Q_q M_q\big(\eta_w(\bar{u} a v)w \b\big)\big)_{12} \\
& \quad + (q^3+1)\, \big(M_q\big(\b w \eta'_w(\bar{v} \bar{a} u)\big) S Q_q M_q\big(w \eta'_w(\bar{u}) \b\big)\big)_{12} \\
& = (q^3+1)\, q^{|\eta_w(u)w|} \big(R_q S Q_q M_q\big(\overline{w}^{-1} \eta_w(a v) w \b\big)\big)_{12} \\
& \quad + (q^3+1)\, q^{|\eta_w(u)w|} \big(M_q\big(\b w \eta'_w(\bar{v} \bar{a}) \overline{w}^{-1}\big) S Q_q R_q\big)_{12} \\
& = (q^3+1)\, q^{|\eta_w(u)w|} d_a\, \big(M_q\big(\b \eta_w(v) w \b\big)_{12} - M_q\big(\b w \eta'_w(\bar{v}) \b\big)_{12}\big) \\
& = (q^3+1)\, q^{|\eta_w(u)w|} d_a\, \Delta_w(v),
\end{aligned}
\]
with $d_{\c} = -q$ and $d_{\d} = q^4$.
Here, we use for the third equation that 
\[
M_q(\a\b\b\a) = M_q(\b\a\a\b) + (q^3{+}1)\,S Q_q, \quad \mbox{where} \quad S = \begin{pmatrix}0&-1\\1&0\end{pmatrix}, \ Q_q = \begin{pmatrix}q&0\\0&1\end{pmatrix}.
\]
For the fourth equation, we use that, by~\eqref{e:bar},
\[
M_q(z)\, S\, Q_q\, M_q(\overline{z}) =  M_q(z)\, S\, {}^t\!M_q(z)\, Q_q = \det(M_q(z))\, S Q_q = q^{|z|} S Q_q
\]
for all $z \in \{\a,\b\}^*$, in particular for $z = \eta_w(u) w$ (with $\overline{z} = \eta_w(\overline{u}) \overline{w}$) and for $z = \overline{w} \eta'_w(u)$ (with $\overline{z} = w \eta'_w(\overline{u})$).
For the fifth equation, we use that 
\[
\begin{aligned}
(1,0)\, R_q S Q_q M_q(\overline{w}^{-1} \eta_w(\c)) & = (1,0)\, R_q S Q_q M_q(\a\b\b\a) = - (q^2,q) = -q\, (1,0)\, R_q, \\
(1,0)\, R_q S Q_q M_q(\overline{w}^{-1} \eta_w(\d)) & =  (1,0)\, R_q S Q_q M_q(\b\a\a\b) = (q^5,q^4) = q^4 (1,0)\, M_q(\b), \\
M_q(\eta'_w(\d) \overline{w}^{-1}\big) S Q_q R_q {}^t\!(0,1) & = M_q(\b\a\a\b) S Q_q R_q {}^t\!(0,1) = {}^t\!(q,q) = q\, M_q(\b)\, {}^t\!(0,1), \\
M_q(\eta'_w(\c) \overline{w}^{-1}\big) S Q_q R_q {}^t\!(0,1) & = M_q(\a\b\b\a) S Q_q R_q {}^t\!(0,1) = {-} {}^t\!(q^4,q^4) = {-} q^4 R_q {}^t\!(0,1).
\end{aligned}
\]
Therefore, $\Delta_w(v) = 0$ implies that $\Delta_w(u \a \bar{u} a v) = \Delta_w(u \b \bar{u} a v)$.

The proof of $\Delta_w(u \c \bar{u} a v) = \Delta_w(u \d \bar{u} a v)$ for odd $|u|$ runs along the same lines.
\end{proof}

\begin{lemma} \label{l:Delta2}
For all $v \in (\{\a,\b\}\{\c,\d\})^*$, $w \in \{\a,\b\}^*$, we have $\Delta_w(v) = 0$. \end{lemma}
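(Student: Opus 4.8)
The plan is to prove Lemma~\ref{l:Delta2} by induction on the length of $v$, using Lemma~\ref{l:Delta} as the inductive engine and exhibiting a base class of words on which $\Delta_w$ vanishes for a trivial reason. The starting observation is that $\Delta_w(v) = 0$ as soon as the two words $\eta_w(v)\,w$ and $w\,\eta'_w(\overline v)$ coincide, since then the two matrices whose $12$-entries are subtracted in the definition of $\Delta_w(v)$ are literally equal. Writing $\eta_w(V_i) = X_i c_i$ with prefix $X_i \in \{w,\overline w\}$ and core $c_i \in \{\a\b\b\a, \b\a\a\b\}$ (so that $\eta'_w(V_i) = c_i X_i$ and $c_{\overline{V_i}} = \overline{c_i}$, $X_{\overline{V_i}} = X_i$), a direct block-by-block comparison shows that $\eta_w(v)\,w = w\,\eta'_w(\overline v)$ holds exactly when the core sequence $c_1, \ldots, c_k$ is anti-palindromic, i.e. $c_i = \overline{c_{k+1-i}}$ for all $i$; equivalently, $V_i \in \{\a,\c\} \Leftrightarrow V_{k+1-i} \in \{\b,\d\}$ (the prefix conditions $X_{i+1} = X_{k+1-i}$ are automatic, as $X$ depends only on the parity of the position). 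In particular $v = \varepsilon$ and, for instance, $v = \b\c$ lie in this base class, so $\Delta_w$ vanishes on them for every~$w$.

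For the inductive step, I read Lemma~\ref{l:Delta} as a toggling principle: once $\Delta_w$ is known to vanish on all words shorter than $v$ (the induction hypothesis), the two identities of Lemma~\ref{l:Delta} assert that the value $\Delta_w(u x \overline u a v')$ is unchanged when the central letter $x$ of the anti-palindromic block $u x \overline u$ is replaced by $\overline x$, for any suffix $v'$, which is shorter and hence satisfies $\Delta_w(v') = 0$. Concretely, the letter of $v$ at position $j$ can be complemented without changing $\Delta_w(v)$ whenever $2j \le |v|$ and the length-$(2j-1)$ prefix of $v$ is anti-palindromic about~$j$ (that is, $V_{j-s} = \overline{V_{j+s}}$ for $1 \le s < j$); such a complementation flips the core $c_j$ between $\a\b\b\a$ and $\b\a\a\b$. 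Since each toggle preserves $\Delta_w(v)$ and the base-class words have $\Delta_w = 0$, it suffices to connect an arbitrary $v$ to a base-class word by a sequence of toggles. The two cases of Lemma~\ref{l:Delta} match the two parities: odd $j$ (letter in $\{\a,\b\}$, even $|u|$) uses the first identity, even $j$ (letter in $\{\c,\d\}$, odd $|u|$) uses the second.

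To carry this out I would toggle the positions $j$ of the left half ($2j \le |v|$) in the order $j = \lfloor |v|/2\rfloor, \ldots, 2, 1$, so as to force $c_j = \overline{c_{|v|+1-j}}$ at each step; the toggle at $j=1$ is unconditional, and the admissibility condition for toggling $j$ constrains only cores at positions below $j$ together with their mirror images, which can be pre-adjusted using the always-available lower toggles. After the whole left half has been processed the core sequence is anti-palindromic, $v$ has become a base-class word, and $\Delta_w(v) = 0$ follows. The positions of the right half are frozen — no admissible block $u x \overline u a v'$ of the correct length reaches them — but they need not be touched, since the anti-palindromy condition pairs each of them with a (movable) left-half position.

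The main obstacle is precisely this combinatorial reachability statement: one must show that the constrained toggles, each available only when a specific anti-palindromic prefix is already in place, can be scheduled to produce a globally anti-palindromic core sequence, and that adjusting a lower position to enable a higher toggle never undoes progress already made. Verifying the scheduling rigorously, while tracking the even/odd parity constraints coming from the two cases of Lemma~\ref{l:Delta}, is the delicate part; everything else reduces to the routine check of the base-class identity $\eta_w(v)\,w = w\,\eta'_w(\overline v)$ and to the induction on length.
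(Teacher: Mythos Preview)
Your plan is essentially the paper's own argument: outer induction on $|v|$, toggling via Lemma~\ref{l:Delta}, and terminating at a word where $\eta_w(v)\,w = w\,\eta'_w(\overline v)$ as words (your anti-palindromic core class is exactly the right target, and your verification that the $X$-conditions are automatic for $v\in(\{\a,\b\}\{\c,\d\})^*$ is correct). The only genuine gap is the one you flag yourself: you have not actually proved the ``reachability'' claim that the left-half letters can be freely adjusted, and the phrase ``always-available lower toggles'' hides a circularity, since each lower toggle has its own admissibility prefix.

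The clean way to close this gap---and this is precisely what the paper does---is to replace the ad~hoc scheduling by an inner induction on~$j$: prove, for $j=1,\ldots,k$ (with $|v|=2k$), that $\Delta_w(V_1\cdots V_{2k})$ is unchanged by any type-preserving modification of the first $j$ letters. The base case $j=1$ is Lemma~\ref{l:Delta} with $u=\varepsilon$. For the step $j-1\Rightarrow j$, first use the case $j{-}1$ to replace $V_1\cdots V_{j-1}$ by $\overline{V_{2j-1}}\cdots\overline{V_{j+1}}$ (this is type-preserving since positions $i$ and $2j-i$ have the same parity), then apply Lemma~\ref{l:Delta} at position~$j$ (the suffix $V_{2j+1}\cdots V_{2k}$ has $\Delta_w=0$ by the outer induction), and finally use the case $j{-}1$ again to reset the first $j{-}1$ letters arbitrarily. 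Once the claim holds for $j=k$, choose the first $k$ letters so that $c_i=\overline{c_{2k+1-i}}$ for all $i\le k$, landing in your base class. This replaces your decreasing-$j$ schedule and the worry about ``undoing progress'' with a single clean statement, and is exactly the missing piece of your proof.
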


\begin{proof}
We proceed by induction on the length of~$v$. 
The statement is trivially true for $|v| = 0$. 
Suppose that it is true up to length $k-1$ and consider it for length~$k$.

We claim that the value of $\Delta_w(v_1 \cdots v_{2k})$ does not depend on the choice of $v_1 \cdots v_j$, for any $j \le k$. 
The claim is true for $j = 1$, by Lemma~\ref{l:Delta} with $u = \varepsilon$ and the induction hypothesis.
If the claim is true up to $j{-}1$, then it gives together with Lemma~\ref{l:Delta}, for any $u_1 \cdots u_j \in (\{\a,\b\}\{\c,\d\})^* \cup (\{\a,\b\}\{\c,\d\})^* \{\a,\b\}$, that
\[
\begin{split}
& \Delta_w(u_1 \cdots u_j v_{j+1} \cdots v_{2k}) = \Delta_w(\overline{v_{j+1} \cdots v_{2j-1}} u_j v_{j+1} \cdots v_{2k}) \\
& \qquad = \Delta_w(\overline{v_{j+1} \cdots v_{2j-1}} v_j v_{j+1} \cdots v_{2k}) = \Delta_w(v_1 \cdots v_{2k}).
\end{split}
\]
This proves the claim. 

Since $\eta_w(\overline{u}u) w = w \eta'_w(u\overline{u})$ for all $u \in (\{\a,\b\}\{\c,\d\})^* \cup (\{\a,\b\}\{\c,\d\})^* \{\a,\b\}$, we have $\Delta_w(\overline{v_{k+1} \cdots v_{2k}}v_{k+1} \cdots v_{2k}) = 0$, thus $\Delta_w(v_1 \cdots v_{2k}) = 0$ for all $v_1 \cdots v_{2k} \in (\{\a,\b\}\{\c,\d\})^*$. 
\end{proof}

\begin{proof}[Proof of Theorem~\ref{t:identity2}]
As for~\eqref{e:identityM1}, it suffices to prove \eqref{e:identityM2} for $k = m = n = 0$.
Since $\varphi_w(v) = \eta_w(\tau(v))$ and $\varphi_w(\overline{v}) w = w \eta'_w\big(\,\overline{\!\tau(v)\!}\,\big)$ for all $w \in \{\a,\b\}^*$, $v \in \{\a,\b,\c,\d\}^*$, Lemma~\ref{l:Delta2} implies that \eqref{e:identityM2} holds.

Let $\sigma$ be as in the proof of Theorem~\ref{t:identity1}.
Then 
\[
\begin{aligned}
\mu_q\big(\a \psi_w(v) w \b\big)_{12} & = M_q(\b \eta_{\a\sigma(w)\b}(v) \a \sigma(w) \b\b\a\a) = M_q(\b \eta_{\a\sigma(w)\b}(\widehat{v}) \a \sigma(w) \b\b\a\a) \\
& = \mu_q\big(\a \psi_w(\widehat{v}) w \b\big)_{12},
\end{aligned}
\]
using that $\a \sigma(\psi_w(v)) = \eta_{\a\sigma(w)\b}(v) \a$, and using \eqref{e:identityM2} for the second equation. 
\end{proof}

The equation $M_q(x)_{12}=M_q(y)_{12}$ has many solutions
$x,y\in\{\a,\b\}^*$ 
which are not of the form
of Equation~\eqref{e:identityM1} or~\eqref{e:identityM2}, for example 
\[
M_q(\b\b\a\a\a\a\a\b\b)_{12} = 1 + 2q + 3q^2 + 4q^3 + 4q^4 + 4q^5 + 3q^6 + 2q^7 + q^8 = M_q(\b\a\a\a\b\a\a\a\b)_{12},
\]
but we believe that Equations~\eqref{e:identitymu1} and~\eqref{e:identitymu2} are complete.

\begin{question}
    Do there exist $x,y\in\{\a,\b\}^*$ satisfying
    $\mu_q(x)_{12}=\mu_q(y)_{12}$
    which are not given by Equation~\eqref{e:identitymu1} or~\eqref{e:identitymu2}?
\end{question}

\bibliographystyle{myalpha} 
\bibliography{biblio}

\end{document}